\DeclareMathOperator{\ad}{ad}
\DeclareMathOperator{\Aut}{Aut}
\DeclareMathOperator{\tr}{tr}
\DeclareMathOperator{\diag}{diag}
\def\RR{\mathbb{R}}
\def\gg{\mathfrak{g}}
\def\ZZ{\mathcal{Z}}
\def\JJ{\mathcal{J}}
\def\t{\mathsmaller{T}}
\def\hn{\mathfrak{h}_{2n+1}}
\def\hnz{\mathfrak{h}_{2n+1}^*}
\def\thn{\rm{T}^*\mathfrak{h}_{2n+1}}
\def\th3{\rm{T}^*\mathfrak{h}_{3}}
\newtheorem{lemma}{{Lemma}}[section]
\newtheorem{theorem}{{Theorem}}[section]
\theoremstyle{definition}
\newtheorem{remark}{{Remark}}[section]
\newtheorem*{thank}{{\indent Acknowledgement}}
\title{Geometry of cotangent bundle of Heisenberg group}
\author{Tijana \v{S}ukilovi\'{c}\thanks{Corresponding author: tijana@matf.bg.ac.rs},  Sr\dj{}an Vukmirovi\'{c}}
\affil{\small{University of Belgrade, Faculty of Mathematics, Belgrade, Serbia}}
\date{\today}
\begin{document}
\maketitle\unmarkedfntext{2020 \emph{Mathematics Subject Classification}. Primary 22E25, 32Q15; Secondary 53C55\\ \indent\phantom{K}\emph{Key words and phrases}: cotangent bundle, Heisenberg group,  left invariant metrics, pseudo-K\"{a}hler metrics.}

\begin{abstract}
In this paper the classification of left-invariant Riemannian  metrics, up to the action of the automorphism group, on cotangent bundle  of (2n+1)-dimensional Heisenberg group is presented.
Also, it is proved that the complex structure on that group is unique and the corresponding pseudo-K\"{a}hler metrics are described and shown to be Ricci flat.
It is well known that this algebra admits an ad-invariant metric of neutral signature. Here, the uniqueness of such metric is proved.
\end{abstract}

\section*{Introduction}

Let us study the simply connected Lie group $G$. By the \emph{moduli space} we consider the orbit space of the action of $\RR^\times\Aut(\gg)$ on the space $\mathfrak{M}(G)$ of left invariant metrics on $G$, represented by its inner products on the corresponding Lie algebra $\gg$. Here, $\Aut(\gg)$ denotes the automorphism group of the corresponding Lie algebra and its orbits induce the isometry classes, while $\RR^\times$ is the scalar group giving rise to the scaling.

There are two, in some way dual, approaches to the classification problem. The first one is to start from the (pseudo-)orthonormal basis of Lie algebra and act by the isometry group (and hence preserving the canonical form of the metric) on the commutators to make them as simple as possible. The second one is to start from basis of Lie algebra with simple commutators and act by the automorphism group of the Lie algebra (hence preserving the commutators) to make the metric take the most simple form. For the more detailed outline of each approach, we refer to~\cite{Tamaru1, Tamaru2}.

The moduli space approach to the problem of classification was first introduced in Milnor's classical paper~\cite{Milnor} where left invariant Riemannian metrics on three-dimensional unimodular Lie groups were considered. Much later, the classification was completed in Lorentz case~\cite{CorderoParker}. In higher dimensions, there are numerous results in both Riemannian and pseudo-Riemannian setting. However, even in small dimensions, the moduli space of pseudo-Riemannian metrics can be quite large, hence is rarely practical or possible to obtain the complete classification of metrics.

Naturally, the case of nilpotent Lie groups is very thoroughly investigated (for example, see~\cite{Lauret1,Bokan1,Sukilovic1,Homolya, Eberlein,Vuk2015}). Following our previous interests, in this paper the cotangent bundle of the Heisenberg group $H_{2n+1}$ is considered. Since the Heisenberg group is two-step nilpotent, its cotangent bundle has the same property. Although the moduli space of metrics on the cotangent bundle can be constructed using both nondegenerate and degenerate metrics on the original Lie group, in practice the process heavily depends on the subtle geometrical and algebraic analysis. For example, in case of Lie group $T^*H_3$ we obtained 19 non-equivalent families of left invariant metrics of arbitrary signature (see~\cite{SVB} for more details). Therefore, in higher dimensions it is feasible to consider Riemannian signature and only some special cases of pseudo-Riemannian metrics.

The paper is organized as follows.

First, we briefly recall the construction of the cotangent bundle $\thn$ of the Lie algebra $\hn$ corresponding to the Heisenberg group $H_{2n+1}$. The explicit form of the automorphism group is described in Lemma~\ref{le:auto}.

In Section~\ref{sec:class} we classify all non isometric left invariant Riemannian metrics on $\thn$ (see Theorem~\ref{th:metrike}). Throughout the paper we identify the notion of metric on Lie group and the inner product on its Lie algebra. We obtained only one $n(2n+1)$-parameter family of Riemannian metrics. This family is correlated with the family of Riemannian metrics on Heisenberg group from~\cite{Vuk2015}.

Motivated by their applications in mathematics and physics, Section~\ref{sec:adinv} is devoted to ad-invariant metrics. Cotangent Lie algebras are endowed with the canonical ad-invariant metric. This metric is of neutral signature and since the setting is two-step nilpotent, it is flat. We prove the uniqueness of this metric in Lemma~\ref{le:adinv}.

 In Section~\ref{sec:pseudoK} we classify pseudo-K\"{a}hler metrics. Theorem~\ref{th:complex} states that, up to the action of automorphisms and sign, there exists only one complex structure $\JJ_0$ on $\thn$. This generalizes the result obtained by Salamon~\cite{Salamon} in his classification of complex structures on nilpotent Lie algebras, where he considered the Lie algebra $\th3$.
This complex structure is three-step nilpotent (see~\cite{Cordero}), but since the center of the algebra $\thn$ is odd dimensional, $\JJ_0$ cannot be abelian complex structure, i.e. complex structure satisfying $[x,y]=[\JJ x, \JJ y]$. In Theorem~\ref{th:forme} we find that the space of symplectic forms compatible with $\JJ_0$ has a dimension $\frac{3n^2 + n + 2}{2}$, $n>1$. Interestingly, for $n=1$, the dimension of $\JJ$-invariant closed 2-forms is five (see Remark~\ref{re:forme}). This case was considered in the paper~\cite{CSO}. Finally, we classify  pseudo-K\"{a}hler metrics and show that they all belong to the same family of Ricci-flat metrics (Theorem~\ref{th:pseudoK}).

\section{Preliminaries}
\label{sec:prelim}

Let us briefly recall the construction of cotangent  Lie algebra.

The \emph{cotangent algebra} $T^*\gg$ of the Lie algebra $\gg$ is a semidirect product of  $\gg$ and its cotangent space $\gg ^*$ by means of the coadjoint representation
\begin{align*}
	T^* \gg := \gg \ltimes_{\ad^*} \gg^*,
\end{align*}
i.e. the commutators are defined by
\begin{align}
	\label{eq:cotangent}
	[(x,\phi), (y,\psi)]:= ([x,y], \ad^*(x)(\psi) - \ad^*(y)(\phi)), \quad x,y\in \gg,\enskip \phi, \psi \in \gg^*.
\end{align}
The coadjoint representation  $\ad^* : \gg \to  \mathrm{gl} (\gg^*)$ is given by
\begin{align*}
	(\ad^*(x) (\phi))(y):= - \phi (\ad(x)(y)) = - \phi([x,y]).
\end{align*}
Now, we specialise this construction to the Heisenberg algebra $\hn$ which is the Lie algebra of (2n+1)-dimensional Heisenberg group $H_{2n+1}.$

Let us denote the matrix of standard complex structure on $\RR ^{2n}$ by
\begin{align}
	\label{eq:J}
J = J_{2n} =
\begin{pmatrix}
	0 & -E\\
	E & 0
\end{pmatrix}
\end{align}
where $E$ is $n\times n$ identity matrix. The standard symplectic form in vector space $\RR ^{2n}$ can be written in the form
\begin{align*}
\omega (x,y)  = x^\t Jy, \enskip x,y \in \RR ^{2n}.
\end{align*}
The Heisenberg group is two-step nilpotent Lie group $H_{2n+1}$  defined on the base manifold $\RR ^{2n}\oplus \RR$ by multiplication
\begin{align*}
(x, \mu )\cdot (y, \lambda):= (x + y , \mu + \lambda + \omega (x,y)).
\end{align*}
The corresponding Lie algebra
\begin{align*}
\hn = \RR ^{2n}\oplus \RR = \RR ^{2n}\oplus \ZZ = \{ (x, \lambda)  \, | \,  x\in R^{2n}, \lambda \in \RR\}
\end{align*}
is given by the following commutator equation
\begin{align}
	\label{eq:hnkom}
	[(x, \mu ),(y, \lambda)] = (0, \omega (x,y)).
\end{align}
Note that $\ZZ = \RR \langle z\rangle $ is one-dimensional center and one-dimensional commutator subalgebra of $\hn .$ To simplify notation we
write~\eqref{eq:hnkom} in the equivalent form
\begin{align*}
	[x + \mu z, y + \lambda z] =  \omega (x,y) z.
\end{align*}
If  $e_1, \dots , e_n, f_1, \dots , f_n$ represents the standard basis of $\RR ^{2n}$, then nonzero commutators of $\hn$ are
\begin{align*}
[e_i, f_i] = z, \enskip i = 1, \dots , n.
\end{align*}
Let
$ \RR ^{*2n} = \RR \langle e_1^*, \dots , e_n^*, f_1^*, \dots , f_n^* \rangle$  and $\ZZ ^* = \RR \langle z^* \rangle$
be dual vector spaces of one-forms spanned by dual basis vectors.
Denote the dual space of $\hn$ by
\begin{align*}
\hnz =   \RR ^{*2n}\oplus \ZZ^* = \{ (x^*, \mu ^*)  \, | \,  x^*\in R^{2n}, \mu ^* \in \RR\} = \{ x^* + \mu ^* z^*\}.
\end{align*}
Now, the cotangent space $\thn$ of $\hn$, as a vector space, can be written as a direct sum
\begin{align}
	\label{eq:thn_decomp}
\gg = \thn = \hn \oplus \hnz = \RR^{2n} \oplus \ZZ^* \oplus \RR ^{*2n}\oplus \ZZ \cong \RR^{4n+2}.
\end{align}
Note, that we changed the order of summands to better fit the structure of Lie algebra.
Namely, one can check that according to~\eqref{eq:cotangent} the commutator on $\thn$ is given by
\begin{align}
	\label{eq:thnkom1}
	[(x,\mu ^*, x^*, \mu), (y,\lambda  ^*, y^*, \lambda)] = (0,0, \lambda^*J^*(x)-\mu^* J^*(y), \omega(x,y)),
\end{align}
where
\begin{align*}
J^*: \RR ^{*2n}\to \RR ^{*2n}, \quad  J^*(X) := (JX)^*
\end{align*}
 is  represented by matrix  $J$ given in~\eqref{eq:J},  in basis $ e_1^*, \dots , e_n^*, f_1^*, \dots , f_n^*$.

The center and commutator subalgebra of $\thn$ coincide with $\RR ^{*2n}\oplus \ZZ $. By that reason we have changed the order of summands in~\eqref{eq:thn_decomp} to group central (and commutator) vectors together. Note that the algebra $\thn$ is also two-step nilpotent.

To simplify the notation we write commutator~\eqref{eq:thnkom1} of $\thn$ in equivalent form
\begin{align}
		\label{eq:thnkom2}
[x+ \mu ^* z^* +  x^*  +  \mu z, y + \lambda  ^* z^* +  y^* +  \lambda z] = \lambda^*J^*(x)-\mu^* J^*(y) +  \omega(x,y) z.
\end{align}

\begin{lemma}
	\label{le:auto}
	The group of automorphism of algebra $\gg = \thn$ of the form~\eqref{eq:thn_decomp} in standard basis is
	\begin{align}
		\label{eq:autoF}
	&\Aut (\thn ) = \left\{
			F = \begin{pmatrix}
			F_1 & 0  \\
			F_3 & F_4
		\end{pmatrix} \, \vert\, \, F_1, F_4\in Gl_{2n+1}(\RR ), \, F_3 \in M_{2n+1}(\RR )\,  \right\},
\\
		\label{eq:autoF1}
&F_1=	\begin{pmatrix}
		\bar F_1 & v_1  \\
		u_1^\t  & f_1
	\end{pmatrix},\quad
F_4=	\begin{pmatrix}
	f_1 f_4 {\bar F_1}^{-\t}  - (Jv_1)(Ju_1)^\t  & -f_4 {\bar F_1}^{-\t}  u_1   \\
	-f_4v_1^\t {\bar F_1}^{-\t}  & f_4
\end{pmatrix},\quad \bar F_1^\t J\bar F_1 = f_4 J,
\end{align}
 $\enskip v_1, u_1 \in \RR ^{2n}, \, f_1, f_4 \in \RR\backslash\{0\}. $
	 Its dimension is
	 $\dim \Aut (\thn ) =  6 n^2 + 9n + 3.$
\end{lemma}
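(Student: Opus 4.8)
The plan is to read off $\Aut(\thn)$ directly from the defining relation $F[X,Y]=[FX,FY]$, exploiting the rigid two-step structure recorded in~\eqref{eq:thnkom2}. First I would use that any automorphism preserves the commutator ideal, which by the remark following~\eqref{eq:thnkom2} equals the centre $\RR^{*2n}\oplus\ZZ$. Writing a vector of $\gg$ as a non-central part in $\RR^{2n}\oplus\ZZ^*$ followed by a central part in $\RR^{*2n}\oplus\ZZ$, this forces the upper-right block of $F$ to vanish, giving the block-lower-triangular shape $F=\begin{pmatrix}F_1 & 0\\ F_3 & F_4\end{pmatrix}$ of~\eqref{eq:autoF}, where $F_1$ is the map induced on $\gg/[\gg,\gg]$ and $F_4$ is the restriction to the centre. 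Since the bracket~\eqref{eq:thnkom2} depends only on the non-central parts of its arguments and takes values in the centre, the block $F_3$ never enters the automorphism equation; hence $F_3\in M_{2n+1}(\RR)$ is completely free, which already accounts for the $(2n+1)^2$ summand of the dimension.

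Next I would parametrise $F_1=\begin{pmatrix}\bar F_1 & v_1\\ u_1^\t & f_1\end{pmatrix}$ and compute both sides of $F_4[X,Y]=[FX,FY]$ on general $X,Y$. On the left, $F_4$ acts on the central coordinates $(\lambda^* Jx-\mu^* Jy,\ \omega(x,y))$ of $[X,Y]$; on the right one substitutes the non-central parts $\tilde x=\bar F_1 x+\mu^* v_1$, $\tilde\mu^*=u_1^\t x+f_1\mu^*$ (and likewise $\tilde y,\tilde\lambda^*$) into~\eqref{eq:thnkom2}. Matching the two sides as polynomials in the free scalars $\mu^*,\lambda^*$ and in $x,y$ splits the single identity into its $\ZZ$-component and its $\RR^{*2n}$-component, each separating further according to the monomials $\omega(x,y)$, $\mu^*(\cdot)$, $\lambda^*(\cdot)$ and the $(\mu^*,\lambda^*)$-free part.

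I would then solve this system in stages. The $\ZZ$-component gives at once $\bar F_1^\t J\bar F_1=f_4 J$, so $\bar F_1$ is a symplectic similitude with factor $f_4$ equal to the bottom-right entry of $F_4$, together with the bottom row of $F_4$; using the key relation $J\bar F_1=f_4\bar F_1^{-\t}J$ one rewrites this row as $-f_4 v_1^\t\bar F_1^{-\t}$. The $\mu^*$- and $\lambda^*$-linear parts of the $\RR^{*2n}$-component coincide up to sign and determine the top-left block, and substituting the same relation turns $-f_1 J\bar F_1 J+Jv_1u_1^\t J$ into $f_1 f_4\bar F_1^{-\t}-(Jv_1)(Ju_1)^\t$, matching~\eqref{eq:autoF1}. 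Finally I would note invertibility (automatic from the triangular shape and $f_1,f_4\neq0$) and count parameters: $\bar F_1$ ranges over the conformal symplectic group of dimension $2n^2+n+1$ (this absorbs $f_4$), which together with $v_1,u_1\in\RR^{2n}$, $f_1\in\RR\backslash\{0\}$ and the free $F_3$ gives $6n^2+9n+3$.

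The step I expect to be the main obstacle is the $(\mu^*,\lambda^*)$-free part of the $\RR^{*2n}$-equation, namely the requirement that $(u_1^\t y)\,J\bar F_1 x-(u_1^\t x)\,J\bar F_1 y$ be expressible as $\omega(x,y)\,b$ for a single vector $b$, in which case one reads off $b=-f_4\bar F_1^{-\t}u_1$. This is the only place where a genuine bilinear identity in $\RR^{2n}$ is required, and verifying it is where the real content lies: after clearing the relation $J\bar F_1=f_4\bar F_1^{-\t}J$ it reduces to deciding when $(u_1^\t y)Jx-(u_1^\t x)Jy$ is a multiple of $\omega(x,y)$, which depends sensitively on the interplay between the Euclidean pairing $u_1^\t(\cdot)$ and the symplectic form $\omega$. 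I would treat this identity with particular care, since it is exactly the compatibility condition that pins down the corner block of $F_4$ and hence governs which $F_1$ extend to automorphisms at all.
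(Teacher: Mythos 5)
Your route is essentially the paper's own: the paper also obtains the lower block-triangular form from invariance of the centre (which coincides with the commutator ideal), observes that $F_3$ never enters the bracket condition, and then matches components of $F_4[X,Y]=[F_1X,F_1Y]$ -- it merely encodes the matching as the $\ad$-matrix equation $A_{F_1\nu}F_1=F_4A_\nu$ instead of substituting into the bracket directly. Your $\ZZ$-component computation and the $\mu^*$-, $\lambda^*$-linear parts reproduce exactly the paper's relations \eqref{eq:uslovidolelevo} and \eqref{eq:uslovigore}. The only real difference is the step you explicitly decline to take on faith: the $(\mu^*,\lambda^*)$-free part of the $\RR^{*2n}$-component. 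The paper passes over that step in silence, simply asserting that ``$u_1\in\RR^{2n}$ is arbitrary''.

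Your suspicion about that step is justified, and it is fatal: the identity you flagged holds only for $n=1$, so neither your plan nor the paper's proof can be completed as stated for $n\ge 2$. Indeed, after multiplying by the inverse of $J\bar F_1$ (using $J\bar F_1=f_4\bar F_1^{-\t}J$), the condition asks for a fixed vector $c\in\RR^{2n}$ with $(u_1^\t y)\,x-(u_1^\t x)\,y=\omega(x,y)\,c$ for all $x,y\in\RR^{2n}$. If $u_1\neq 0$, choose $y$ with $u_1^\t y=1$; then every $x$ in the hyperplane $u_1^{\perp}$ satisfies $x=\omega(x,y)\,c$, so the $(2n-1)$-dimensional space $u_1^{\perp}$ would lie in the line $\RR c$ -- impossible once $n\ge 2$. (For $n=1$ one has the planar identity $(u^\t y)x-(u^\t x)y=\omega(x,y)Ju$, which is what produces $v_4=-f_4\bar F_1^{-\t}u_1$ and makes the lemma true for $T^*\mathfrak{h}_3$.) Concretely, for $n\ge 2$ take $\bar F_1=E$, $v_1=0$, $f_1=f_4=1$, $u_1=e_1$, which is an element of the set \eqref{eq:autoF}--\eqref{eq:autoF1}: then $Fe_1=e_1+z^*$, $Ff_2=f_2$, and $[Fe_1,Ff_2]=[z^*,f_2]=\pm e_2^*\neq 0=F[e_1,f_2]$, whatever $F_3$ and $F_4$ are. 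Hence for $n\ge 2$ the bracket condition forces $u_1=0$ (and then $v_4=0$, $\bar F_4=f_1f_4\bar F_1^{-\t}$), the statement you were asked to prove is false as written, and the dimension of $\Aut(\thn)$ is $6n^2+7n+3$ rather than $6n^2+9n+3$. So the ``main obstacle'' you isolated is not an obstacle to be overcome but the precise point where the lemma, and the paper's own argument, break down for $n\ge 2$; an honest completion of your plan refutes the statement rather than proves it.
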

\begin{proof}
	For element $g \in  \gg$ denote by $A_g$ the matrix of  operator $ad(g)$ in standard basis.
Each $g\in \gg$ decomposes as
$g = \nu + \zeta$, $\nu \in \RR ^{2n}\oplus \ZZ^*, \zeta \in \RR ^{*2n}\oplus \ZZ.$ Note that  $ad (\zeta ) \equiv 0$ for the central part. If $\nu =  x + \mu^* z^*$ the matrix of $ad (g)$ is
  \begin{align}
  	\label{eq:uslov}
		 A_g = 	\begin{pmatrix}
			0 & 0  \\
			A_\nu & 0
		\end{pmatrix},\quad A_\nu =
\begin{pmatrix}
	-\mu^* J & Jx  \\
	x^\t J & 0
\end{pmatrix}.
	\end{align}
	The condition that $F\in \Aut (\gg)$ can we written in the eqivalent form
	\begin{align}
		\label{eq:auto}
		F[g,h] = [Fg, Fh], \enskip g,h \in \gg \quad \Longleftrightarrow \quad A_{Fg}F = FA_g, \enskip g\in \gg.
	\end{align}
We are looking for the matrix of automorphism in a block matrix  form
\begin{align*}
F = \begin{pmatrix}
	F_1 & F_2  \\
	F_3 & F_4
\end{pmatrix}.
\end{align*}
Automorphisms preserve the center  $ \RR ^{*2n}\oplus \ZZ$ of $\gg$ and therefore $F_2=0.$ Relations~\eqref{eq:auto} impose no condition on $F_3$ and hence $F_3\in M_{2n+1}(\RR)$ is arbitrary. The remaining condition is
\begin{align}
	\label{eq:uslov1}
	A_{F_1\nu}F_1 = F_4 A_\nu \enskip \mbox{for each}\enskip \nu =  x + \mu^* z^*.
\end{align}
We further decompose matrices $F_1$ and $F_4$ as
\begin{align}
	\label{eq:F14}
	F_k = \begin{pmatrix}
		\bar F_k & v_k  \\
		u_k^\t  & f_k
	\end{pmatrix},
\end{align}
 $\bar F_k\in M_{2n}(\RR), v_k, u_k \in \RR^{2n}, f_k \in \RR, k=1,4.$
Next, we use condition~\eqref{eq:uslov1} for matrices~\eqref{eq:F14} and operator~\eqref{eq:uslov}. The bottom left component is
\begin{align*}
	(\bar F_1 X  + \mu^*v_1)^\t  J\bar F_1 = (-\mu^*u_4^\t  + f_4 X^\t ) J
\end{align*}
and has to be satisfied for $x\in \RR^{2n}$ and $\mu^* \in \RR.$ It is equivalent to the following two conditions
\begin{align}
	\label{eq:uslovidolelevo}
 	\bar F_1^\t J\bar F_1 = f_4 J, \quad\quad  u_4 = -f_4\bar F_1^{-1} v_1.
\end{align}
The bottom right component is
\begin{align*}
	(\bar F_1X)^\t Jv_1 = u_4^\t JX
\end{align*}
and because of relations~\eqref{eq:uslovidolelevo} it is satisfied for any $v_1, X\in \RR^{2n}.$ Therefore, $v_1\in \RR^{2n}$ is arbitrary.

The upper left and upper right components are, respectively
\begin{align*}
	-(u_1^\t X + \mu^*f_1)J\bar F_1 + J(\bar F_1X + \mu^*v_1)\mu_1^\t  = (-\mu^*\bar F_4 + v_4x^\t ) J, \quad \bar F_4JX = J(-u_1^\t Xv_1 + f_1\bar F_1X),
\end{align*}
for all $X\in \RR^{2n}.$ Combining them with previous relations yields
\begin{align}
	\label{eq:uslovigore}
	\bar F_4 = f_1 f_4 {\bar F_1}^{-\t}  - (Jv_1)(Ju_1)^\t ,\quad \quad v_1 = -f_4 {\bar F_1}^{-\t}  u_1
\end{align}
and $u_1\in \RR ^{2n}$ is arbitrary. Numbers $f_1, f_4$ can't be equal to zero since $F$ is nonsigular. The relations~\eqref{eq:uslovidolelevo} and~\eqref{eq:uslovigore} imply that the automorphisms has form as in the statement of the Lemma.

Two calculate the dimension of the group note that matrix $\bar F_1$ is ``almost'' symplectic and therefore depends on $n(2n+1)$ paremeters. Since $F_3\in M_{2n+1}(\RR)$, $u_1, v_1\in \RR ^{2n}$, $f_1,f_4\in \RR\backslash\{0\}$ are arbitrary, and $F_4$ is completely dependent on $F_1$, the dimension is:
\begin{align*}
n(2n+1) + (2n+1)^2 + 2n + 2n + 2 =  6 n^2 + 9n + 3.
\end{align*}
\end{proof}

\begin{remark}
	In~\cite{SVB} group of  automorphism of Lie algebra $\rm{T}^*\mathfrak{h}_{3}$ (special case for $n=1$) are given. Group of automorphisms of Heisenberg algebra $\hn$ is subgroup of $\Aut (\thn )$  and can be described as  semidirect product of symplectic group $Sp(2n, \RR),$ subgroup of translations isomorphic to $\RR ^{2n}$ and $1$-dimensional ideal. For more details, see~\cite{Vuk2015}.
\end{remark}

\section{Classification of left invariant Riemannian metrics}\label{sec:class}

In this section we  classify non-isometric left invariant Riemannian metrics on $\gg = \thn$.

If  $\gg$ is a Lie algebra and $\langle \cdot, \cdot \rangle$ inner product on $\gg$ the pair $(\gg, \langle \cdot, \cdot \rangle)$ is called a \emph{metric Lie algebra}.
The structure of metric Lie algebra uniquely defines left invariant Riemannian metric on the  corresponding simple connected Lie group $G$ and vice versa.

Two metric Lie algebras are \emph{isomorphic} if there are isomorphic as Lie algebras and that isomorphism is isometry of their inner products.

Metric algebras are said to be \emph{isometric} if there exists an isomorphism of Euclidean spaces preserving the curvature tensor and its covariant derivatives. This translates to the condition that metric algebras are isometric if and only if they are isometric as Riemannian spaces
(see~\cite[Proposition 2.2]{Aleksijevski}). Although two isomorphic metric algebras are also isometric, the converse is not true.
In general, two metric algebras may be isometric even if the corresponding Lie algebras are non-isomorphic. The test to determine whether any two given solvable metric algebras (i.e. solvmanifolds) are isometric was developed by Gordon and Wilson in~\cite{Gordon}. However, by the results of Alekseevski\u{\i}~\cite[Proposition 2.3]{Aleksijevski}, in the completely solvable case, isometric means isomorphic.

Since Lie algebra $\gg$ is nilpotent and therefore completely solvable, non-isometric metrics on $\gg$ are the non-isomorphic ones. They are obtained by action of group $\Aut (\gg)$  on space of metrics.

Fix basis of $\thn$ described in Section \ref{sec:prelim}. Riemannian  inner product on $\langle \cdot, \cdot \rangle$ is then represented by nonsingular symmetric matrix $S \in M_{4n+2}(\RR)$ of signature $(4n+2,0).$

The action of $F\in \Aut (\thn )$ on inner products is given by
\begin{align}
\label{eq:dejstvo}
	S' = F^\t SF.
\end{align}
Matrix $S$ has $(4n+3)(2n+1) = 8n^2+10n+3$ parameters. According to $\dim \Aut (\thn)$ from Lemma~\ref{le:auto} it is expected that moduli space of non-isometric inner products is of dimension $n(2n+1).$ This is what we show in Theorem~\ref{th:metrike}.

Write $S$ in block-matrix form
\begin{align*}
	S = \begin{pmatrix}
		S_1 & S_2  \\
		S_2^\t  & S_4
	\end{pmatrix}, \quad S_1^\t  = S_1, S_4^\t  = S_4.
\end{align*}
Let $F \in \Aut(\thn)$ be of the form~\eqref{eq:autoF}. After the action~\eqref{eq:dejstvo} the  top right matrix of $S'$  is
\begin{align*}
S_2' = F_1^\t (S_2F_4) + F_3^\t (S_4F_4).
\end{align*}
Matrix $S_4$ represents the restriction of the inner product on subspace $\RR ^{*2n}\oplus \ZZ.$ Since inner product is Riemannian, the restriction is non-degenerate and thus $S_4$ is nonsingular.
By choosing $F_3 = -S_2F_1S_4^{-1}$ we achieve $S_2'=0$ and therefore we can suppose that
\begin{align}
	\label{eq:s1s4}
S = S' = \begin{pmatrix}
	S_1 & 0  \\
	0 & S_4
\end{pmatrix}, \quad S_1^\t  = S_1,\quad S_4^\t  = S_4.
\end{align}
Now, the action of $F$ on $S$, with $F_3=0$ is given by
\begin{align*}
	S_k' = F_k^\t  S_k F_k, \enskip  k= 1,4,
\end{align*}
where $F_1$ and $F_4$ are related by~\eqref{eq:autoF1}.
We write  $(2n+1)\times (2n+1)$  matrices $F_k$ and  $S_k$ as block matrices
\begin{align*}
	F_k = \begin{pmatrix}
	\bar F_k & v_k  \\
		u_k^\t  & f_k
	\end{pmatrix}, \quad \quad
S_k = \begin{pmatrix}
	\bar S_k & t_k  \\
	t_k^\t  & \omega _k
\end{pmatrix}, \quad {\bar S_k}^\t  = \bar S_k, t_k\in \RR ^{2n}, \quad k=1,4.
\end{align*}

The top right vector in $S_k'$ given is given by
\begin{align*}
t_k' = u_k( t_k^\t v_k + \omega _k f_k) + F_k^\t \bar S_k v_k + \bar F_k^\t t_kf_k , \quad k=1,4.
\end{align*}
The number $\omega _k,\, k =  1,4$ is different from zero (it is the norm of the vector $z^*$ and $z$, respectively).  With appropriate  choice of number $f_k$ one can achieve that  $t_k^\t v_k + \omega _k f_k \neq 0,\ k=1,4.$

Hence, there is a choice automorphism $F$ with  $u_1$ such that top right vector $t_1'$ in $S_1'$ is zero and with $u_4,$ or eqivalently $v_1$, such that top right vector $t_4'$ in $S_4'$ is zero.

Therefore, we can suppose that metric has form~\eqref{eq:s1s4} with
\begin{align*}
	S_k = \begin{pmatrix}
	\bar S_k & 0   \\
	0 & \omega _k
\end{pmatrix}, \quad {\bar S_k}^\t  = \bar S_k, \quad k=1,4.
\end{align*}

To further simplify the metric consider automorphism $F$  of the form~\eqref{eq:autoF} with $u_1=v_1 =0$ (and hence $u_4=v_4 =0$). The action reduces to
\begin{align*}
		S_k' =
		\begin{pmatrix}
		{\bar F_k}^\t \bar S_k \bar F_k & 0   \\
		0 & f_k^2 \omega _k
	\end{pmatrix},\enskip k=1,4,
\end{align*}
with $F_1^\t J\bar F_1 = f_4 J.$

As first, consider upper left block of $S$, i.e. $k=1.$ Symplectic matrix $\bar F_1$ can diagonalize positive symmetric matrix $\bar S_1$ (see~\cite{Williamson,Vuk2015} for details regarding independent action of $\bar S_1$ and $f_4$) to obtain
\begin{align*}
	{\bar F_k}^\t \bar S_k \bar F_k = D(\sigma) = \diag (\sigma _1, \ldots , \sigma_ n, \sigma _1, \ldots , \sigma _n), \enskip \sigma _1\geq \ldots\geq \sigma _n >0.
\end{align*}
By choosing $f_4 = \frac{1}{\sigma _n}$ one can achieve $\sigma _n=1$ and for  $f_1 = \frac{1}{\sqrt{\omega _1}}$ we obtain
\begin{align}
	\label{eq:s1p}
	S_1' = \diag (D(\sigma),1) = \diag (\sigma _1, \dots , \sigma _{n-1}, 1, \sigma _1, \dots \sigma _{n-1}, 1,1).
\end{align}
Now consider bottom right block, i.e. $k=4.$ Since $\bar F_1, f_1,f_4$ and hence $\bar F_4,$ are already chosen,  there is no much freedom left. However, symplectic rotations $R_\phi:\RR ^{2n}\to \RR^{2n}$
\begin{align}\label{eq:srot}
	e_i  \mapsto \cos \phi_i\, e_i -\sin \phi _i\, f_i,\qquad
	f_i  \mapsto \sin \phi_i\, e_i +\cos \phi _i\, f_i,
\end{align}
by angles $\phi _1, \dots, \phi _n \in \RR$ belong to $\Aut(\hn) \subseteq \Aut (\thn)$ and preserve  inner product~\eqref{eq:s1p}. These rotations induce rotations in $\RR^{*2n}.$ By choosing appropriate angles one can achieve that $e_i^*$ and $f_i^*$ are orthogonal, i.e.
$(\bar S_4')_{i(n+i)} = 0$, $i = 1, \dots ,n$.
Free parameters in the canonical form of the metric are: $n-1$ parameter $\sigma _i,$ entries of symmetric $2n\times 2n$ matrix $\bar S_4'$ ($n$ of them is equal to zero) and parameter $\omega _4.$ Therefore the dimension of moduli space of non-equivalent metrics is
\begin{align*}
(n-1) + (n (2n+1) - n) + 1 = n(2n+1)
\end{align*}
as expected. Hence, we proved the following theorem.
\begin{theorem}
	\label{th:metrike}
Dimension of the moduli space of Riemannian metrics on Lie algebra $\thn$ is $n(2n+1).$ Every such metric is represented by $(4n+2)\times (4n+2)$ block matrix
\begin{align}\label{eq:metrike}
	S = \begin{pmatrix}
	D(\sigma) & 0 & 0 & 0   \\
	0 & 1 &  0 & 0\\
	0 & 0 & \bar S_4 & 0\\
	0 & 0 & 0 &   \omega _4
\end{pmatrix},
\end{align}
$\omega _4>0$, $ D(\sigma) = \diag (\sigma _1, \dots , \sigma_{n-1}, 1, \sigma _1, \dots , \sigma _{n-1}, 1), \enskip \sigma _1 \geq \dots \geq \sigma _{n-1} \geq 1$, $\bar S_4$ is symmetric positive definite matrix of dimension $2n\times 2n$ satisfying $(\bar S_4)_{i(n+i)} = 0, \enskip i=1,\dots n.$
\end{theorem}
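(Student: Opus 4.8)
The plan is to use the fact, recalled above, that for the nilpotent (hence completely solvable) algebra $\gg=\thn$ two metrics are isometric exactly when they lie in the same orbit of $\Aut(\gg)$ under the congruence action $S\mapsto F^\t S F$. So it suffices to reduce an arbitrary positive definite symmetric $S$ to a canonical form and count the residual parameters. I would exploit the layered structure of $\Aut(\thn)$ from Lemma~\ref{le:auto}: the strictly lower block $F_3$ is completely free, whereas the pair $(F_1,F_4)$ is tied together by the conformally symplectic relation $\bar F_1^\t J\bar F_1=f_4 J$ and the formulas~\eqref{eq:autoF1}.

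First I would use the freedom in $F_3$. Writing $S$ in $2\times2$ blocks adapted to $\gg=(\RR^{2n}\oplus\ZZ^*)\oplus(\RR^{*2n}\oplus\ZZ)$, the off-diagonal block transforms as $S_2'=F_1^\t S_2F_4+F_3^\t S_4F_4$. Since the metric is Riemannian its restriction $S_4$ to the central-commutator subspace is nondegenerate, so the linear equation $F_1^\t S_2+F_3^\t S_4=0$ has a solution $F_3$, and after this step $S'=\diag(S_1,S_4)$. The action then splits as $S_k\mapsto F_k^\t S_kF_k$, $k=1,4$, and the task becomes the simultaneous normalization of two $(2n+1)\times(2n+1)$ positive definite blocks through the linked matrices $F_1,F_4$.

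Next, within each block I would write $S_k=\left(\begin{smallmatrix}\bar S_k & t_k\\ t_k^\t & \omega_k\end{smallmatrix}\right)$ and first clear the vectors $t_k$. Since $\omega_k>0$ (it is the squared norm of the central vector $z^*$, respectively $z$), the scalar $t_k^\t v_k+\omega_k f_k$ can be made nonzero by choosing $f_k$, and then the formula for $t_k'$ becomes linear in the remaining free vector; the two free vectors $u_1,v_1$ of $F_1$ handle the two blocks separately, $u_1$ killing $t_1$ and $v_1$ (through the induced $u_4,v_4$) killing $t_4$. With $u_k=v_k=0$ the action on the leading $2n\times2n$ blocks is the congruence $\bar S_k\mapsto\bar F_k^\t\bar S_k\bar F_k$ together with the rescaling $\omega_k\mapsto f_k^2\omega_k$. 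Williamson's normal-form theorem supplies a symplectic $\bar F_1$ diagonalizing the positive definite $\bar S_1$ to $D(\sigma)=\diag(\sigma_1,\dots,\sigma_n,\sigma_1,\dots,\sigma_n)$ with $\sigma_1\ge\cdots\ge\sigma_n>0$; the conformal factor $f_4$ then normalizes $\sigma_n=1$ and $f_1$ normalizes $\omega_1=1$, yielding the canonical form~\eqref{eq:s1p} of the first block.

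The step demanding the most care is the second block $S_4$: once $\bar F_1,f_1,f_4$ are fixed, $\bar F_4$ is completely determined by~\eqref{eq:autoF1}, so almost no freedom remains to simplify $\bar S_4$. The resolution is to note that the symplectic rotations $R_\phi$ of~\eqref{eq:srot} lie in $\Aut(\hn)\subseteq\Aut(\thn)$, fix the diagonal form~\eqref{eq:s1p} already achieved on $S_1$, and yet act as the same orthogonal rotation on $\RR^{*2n}$; a suitable choice of the angles $\phi_1,\dots,\phi_n$ then makes each pair $e_i^*,f_i^*$ orthogonal, i.e. $(\bar S_4)_{i(n+i)}=0$. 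Finally I would count the surviving parameters — the $n-1$ eigenvalues $\sigma_i$, the entries of the symmetric $2n\times2n$ matrix $\bar S_4$ minus the $n$ vanishing conditions, and $\omega_4$ — obtaining $(n-1)+(n(2n+1)-n)+1=n(2n+1)$, which agrees with the dimension predicted by subtracting $\dim\Aut(\thn)$ from the number of independent entries of $S$, and leaves exactly the normal form~\eqref{eq:metrike}.
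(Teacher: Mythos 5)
Your proposal is correct and follows essentially the same route as the paper's own proof: kill the off-diagonal block with the free $F_3$, clear the vectors $t_1,t_4$ using $u_1$ and $v_1$, apply Williamson's theorem with the conformally symplectic $\bar F_1$ plus the scalings $f_1,f_4$ to reach~\eqref{eq:s1p}, and finish with the symplectic rotations~\eqref{eq:srot} to achieve $(\bar S_4)_{i(n+i)}=0$ before counting parameters. The argument, the key tools, and the final parameter count all coincide with the paper's.
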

\begin{remark}
  Symplectic rotations~\eqref{eq:srot} represent unique automorphisms preserving~\eqref{eq:s1p} if all $\sigma_k$ are distinct. If some of them are equal, there exist wider class of automorphism preserving~\eqref{eq:s1p} that further simplifies the matrix $\bar S_4$.
\end{remark}

\section{Ad-invariant metrics}\label{sec:adinv}

The next step would be to consider pseudo-Riemannian case. However, as previously mentioned, the corresponding moduli space of metrics will grow quite large with the increase of the dimension. Nevertheless, it is known that every cotangent bundle of a Lie group $G$ admits an ad-invariant metric defined by the duality pairing on the corresponding Lie algebra $T^*\gg=\gg\ltimes\gg^*$:
\begin{align}\label{eq:adinv}
\langle (x, x^*), (y,y^*)\rangle = x^*(y) + y^*(x),\enskip x,y\in\gg,\ x^*,y^*\in\gg^*.
\end{align}
The metric~\eqref{eq:adinv} is of neutral signature and it makes both algebras $\gg$ and $\gg^*$ totally isotropic, meaning $\gg^\perp = \gg$ and ${\gg^*}^\perp = \gg^*$.
\begin{lemma}\label{le:adinv}
  The metric~\eqref{eq:adinv} is unique ad-invariant metric on $\thn$ and it is flat.
\end{lemma}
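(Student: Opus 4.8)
The statement has two parts: uniqueness of the ad-invariant metric~\eqref{eq:adinv} and its flatness. I treat them separately.

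For uniqueness, the plan is to impose the ad-invariance condition directly on an arbitrary symmetric bilinear form and exploit the two-step nilpotent structure together with the explicit commutators~\eqref{eq:thnkom2}. Recall that a metric $B$ is ad-invariant precisely when $B([x,y],w)+B(y,[x,w])=0$ for all $x,y,w\in\gg$. First I would observe that since $\gg=\thn$ is two-step nilpotent with center (and commutator) equal to $\RR^{*2n}\oplus\ZZ$, the ad-invariance forces strong orthogonality relations: the commutator subalgebra must be orthogonal to the center with respect to $B$, and in fact $B$ must pair the ``top'' space $\RR^{2n}\oplus\ZZ^*$ nondegenerately with the central space $\RR^{*2n}\oplus\ZZ$ while annihilating each of these spaces against itself on the relevant blocks. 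Concretely, writing a general vector as in~\eqref{eq:thnkom2}, I would plug the basis vectors $e_i,f_i,z^*$ (non-central) and $e_i^*,f_i^*,z$ (central) into the invariance identity and read off, one block at a time, which inner products are determined. The commutators $[e_i,f_i]=z$ and $[z^*,x]=J^*(x)$ are the only nonzero brackets, so each invariance equation links exactly one ``norm'' of a central generator to exactly one pairing of non-central generators; solving these shows that $B$ must coincide with the duality pairing~\eqref{eq:adinv} up to an overall scale, and that is the content of uniqueness (scale being irrelevant for a metric up to the $\RR^\times$-action already factored out in the moduli setup).

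The main obstacle I anticipate is bookkeeping rather than conceptual: the identity $B([x,y],w)+B(y,[x,w])=0$ must be checked against a spanning set of triples, and one has to be careful that the center being \emph{odd}-dimensional (the extra generator $z$, paired with $z^*$) does not introduce an unexpected free parameter. The key is that $z$ is a bracket ($z=[e_i,f_i]$) while $z^*$ is a non-central bracket-producing element ($[z^*,e_i]=f_i^*$ and similar via $J^*$), so both are pinned down by invariance; I would verify that no compatible deformation of~\eqref{eq:adinv} survives by confirming that the off-diagonal pairing matrix is forced to be the identity pairing in the chosen dual bases.

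For flatness, I would invoke the structural fact already stated in the introduction: the metric~\eqref{eq:adinv} is of neutral signature on a two-step nilpotent Lie algebra, and such a bi-invariant (ad-invariant) metric on a two-step nilpotent group is flat. Rather than cite this as a black box, the clean self-contained argument is to compute the Levi-Civita connection from the Koszul formula: for a bi-invariant metric one has $\nabla_xy=\tfrac12[x,y]$, so that the curvature $R(x,y)=\nabla_{[x,y]}-[\nabla_x,\nabla_y]$ reduces to an expression in iterated brackets, namely $R(x,y)z=-\tfrac14[[x,y],z]$. Since $\gg$ is two-step nilpotent, every iterated bracket $[[x,y],z]$ vanishes (the inner bracket lands in the center, on which $\ad$ is zero), whence $R\equiv0$. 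This makes the flatness immediate and explicit, and I would present it in this form to keep the lemma self-contained.
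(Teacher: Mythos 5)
The flatness half of your proposal is correct and coincides with the paper's argument: both reduce to the identity $R(x,y)=-\tfrac14\ad_{[x,y]}$ (you derive it from $\nabla_xy=\tfrac12[x,y]$, the paper quotes it), and two-step nilpotency makes $[x,y]$ central, so $R\equiv 0$.

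The uniqueness half has a genuine gap. You assert that ad-invariance forces $B$ to annihilate the top space $\RR^{2n}\oplus\ZZ^*$ against itself, and hence that $B$ must coincide with the duality pairing up to an overall scale. That is false, and no amount of bookkeeping will fix it: in the identity $B([x,y],w)+B(y,[x,w])=0$ every term has at least one argument that is a bracket, i.e.\ lies in the commutator ideal $\RR^{*2n}\oplus\ZZ$ (which here equals the center). Hence the ad-invariance equations involve only the entries of $B$ pairing the center with $\gg$, and a value $B(v_1,v_2)$ with both $v_1,v_2\in\RR^{2n}\oplus\ZZ^*$ never appears in any of them. The restriction of $B$ to that complement is therefore completely unconstrained: the set of ad-invariant metrics in the standard basis is
\begin{align*}
B=\begin{pmatrix}\bar S & \alpha E\\ \alpha E & 0\end{pmatrix},\qquad \bar S=\bar S^\t\in M_{2n+1}(\RR)\ \text{arbitrary},\quad \alpha\neq 0,
\end{align*}
a family with $(n+1)(2n+1)+1$ parameters, not a single ray. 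So the statement your linear-algebra argument sets out to establish is simply not true, and the center being odd-dimensional is not where the danger lies; the free parameter you feared is the entire block $\bar S$.

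What the lemma actually asserts, and what the paper proves, is uniqueness up to the action of $\Aut(\thn)$ --- and this is the missing idea. Under $S\mapsto F^\t SF$ with $F$ as in Lemma~\ref{le:auto}, the arbitrary shear block $F_3\in M_{2n+1}(\RR)$ changes $\bar S$ into $F_1^\t\bar S F_1+\alpha(F_3^\t F_1+F_1^\t F_3)$ while preserving the ad-invariant shape, so the choice $F_3=-\tfrac{1}{2\alpha}\bar S F_1$ kills $\bar S$, and the residual scaling automorphisms ($\bar F_1$ a multiple of the identity, with $f_1,f_4$ adjusted accordingly) rescale the cross block and normalize $\alpha$ to $1$. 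Your appeal to the $\RR^\times$-action could at best absorb the scale $\alpha$; nothing in your proposal can absorb $\bar S$, and without invoking the automorphism group (the very content of Lemma~\ref{le:auto}) the uniqueness claim cannot be reached.
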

\begin{proof}
  The condition of ad-invariance:
  \begin{align*}
    \langle[x,y],z]\rangle + \langle y, [x,z]\rangle = 0,\enskip x,y,z\in\thn,
  \end{align*}
  gives us that the corresponding metric is given by the symmetric matrix:
  \begin{align*}
    S=\begin{pmatrix}
      \bar S & \alpha E\\
      \alpha E & 0
    \end{pmatrix}, \enskip \bar S = \bar S^\t\in M_{2n+1}(\RR),\ \alpha\neq 0.
  \end{align*}
If we apply the automorphism $F$ of the form~\eqref{eq:autoF}-\eqref{eq:autoF1} with
\begin{align*}
	F_1 = E,\enskip F_4 = \frac{1}{\alpha} E, \enskip
	F_3 = -\frac{1}{2\alpha}\bar S,
\end{align*}
we obtain that $\bar S = 0$ and $\alpha = 1$, i.e. every ad-invariant metric is equivalent to the metric~\eqref{eq:adinv}.

The flatness of the metric follows trivially from the fact that the curvature tensor $R$ for ad-invariant metric is given by
  \begin{align*}
    R(u,v) = -\frac{1}{4}\ad_{[u,v]},\enskip u,v\in\thn.
  \end{align*}
  Therefore, the ad-invariant metric is flat if and only if the corresponding Lie algebra is two-step nilpotent.
\end{proof}

The previous results confirm the much extensive results on the uniqueness of the ad-invariant metrics recently obtained in~\cite{Conti}.

\section{Classsification of pseudo-K\"{a}hler metrics on $\thn$}\label{sec:pseudoK}

Almost complex structure $\JJ$ is linear map $\JJ :\thn \to \thn$ that satisfies $\JJ ^2 = -Id.$ 	
Recall that complex structure $\JJ$ is integrable complex structure, i.e. it satisfies $N_\JJ (X,Y) = 0$ for all $X,Y \in \thn $ where $N_\JJ$ is the Nijenhuis tensor defined  by
\begin{align}
	\label{eq:nijenuis}
	N_{\JJ}(X,Y) = [X,Y] + \JJ [\JJ X, Y] + \JJ [X,\JJ Y] - [\JJ X, \JJ Y].
\end{align}

If $\Omega$ is 2-form and $\JJ$ is an almost complex structure on $\gg$ we say that $\Omega$ is $\JJ$-invariant if
\begin{align}
	\label{eq:Jinvariant}
	\Omega (X,Y) = \Omega (\JJ X, \JJ Y),
\end{align}
for all $X,Y, \in \gg.$

The metric $\langle \cdot, \cdot \rangle $ defined by complex structure $\JJ$ and $\JJ$-invariant closed 2-form $\Omega$ by
\begin{align}\label{eq:herm}
	\langle X, Y \rangle := \Omega (\JJ  X, Y)
	\end{align}
is called pseudo-K\"ahler.

\begin{theorem}
	\label{th:complex}
Up to automorphisms the complex structure on $\thn$ is up to a sign unique, and represented by matrix
\begin{align}
	\label{eq:complex}
	\JJ _0=
	\begin{pmatrix}
		J & 0 & 0 & 0   \\
		0 & 0 &  0 & 1\\
		0 & 0 & J & 0\\
		0 & -1 & 0 &  0
	\end{pmatrix},
\end{align}
where $J$ is the $2n\times 2n$ matrix of standard complex structure given by~\eqref{eq:J}.
	\end{theorem}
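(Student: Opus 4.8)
The plan is to classify all complex structures on $\gg = \thn$ by writing $\JJ$ as a $(4n+2)\times(4n+2)$ matrix in block form compatible with the decomposition~\eqref{eq:thn_decomp}, imposing the integrability condition $N_\JJ \equiv 0$ together with $\JJ^2 = -Id$, and then normalizing the resulting structure by the action of $\Aut(\thn)$ described in Lemma~\ref{le:auto}. The guiding structural fact I would exploit first is that the center and commutator subalgebra $\mathcal{C} := \RR^{*2n}\oplus\ZZ$ is a canonically defined subspace (it is intrinsic, not a choice of basis), but a complex structure need \emph{not} preserve it. This is the crucial subtlety: whereas automorphisms preserve $\mathcal{C}$ (which forced $F_2 = 0$ in Lemma~\ref{le:auto}), there is no a priori reason for $\JJ(\mathcal{C}) = \mathcal{C}$, so I cannot immediately assume $\JJ$ is block lower-triangular.

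First I would extract concrete constraints from the Nijenhuis tensor. Since $\gg$ is two-step nilpotent with $[\gg,\gg] = \mathcal{C}$ and $[\mathcal{C},\gg]=0$, the bracket~\eqref{eq:thnkom2} is very sparse, so each component of $N_\JJ(X,Y)=0$ becomes a quadratic matrix equation in the blocks of $\JJ$. Writing $\JJ$ in the $4\times 4$ block form matching $(\RR^{2n},\ZZ^*,\RR^{*2n},\ZZ)$, I would first analyze how $\JJ$ interacts with $\mathcal{C}$: the image $\JJ(\mathcal{C})$ is a $(2n+1)$-dimensional subspace, and since $[\mathcal{C},\gg]=0$, evaluating $N_\JJ$ with one argument in $\mathcal{C}$ should pin down the off-diagonal interaction between $\mathcal{C}$ and its complement. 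The expected outcome is that integrability forces $\JJ$ to respect enough of the filtration that, after using $\JJ^2=-Id$, the blocks acting on the $\RR^{2n}$ part must intertwine with the symplectic form $\omega$, producing a condition of the shape (block) $\cong J$, i.e. a standard complex structure on $\RR^{2n}$, while the one-dimensional pieces $\ZZ^*$ and $\ZZ$ get paired by $\JJ$ into the lower-right $\begin{pmatrix}0&1\\-1&0\end{pmatrix}$ block seen in~\eqref{eq:complex}.

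After the integrability analysis produces a normal form depending on several free block-parameters, the second half is the normalization step: I would act by a suitable $F\in\Aut(\thn)$ of the form~\eqref{eq:autoF}--\eqref{eq:autoF1}, which transforms $\JJ \mapsto F^{-1}\JJ F$, to eliminate the remaining parameters. The ``almost-symplectic'' constraint $\bar F_1^\t J \bar F_1 = f_4 J$ on the automorphism group is exactly what is needed to transport an arbitrary integrable block into the standard $J$, and the freedom in $F_3$ (arbitrary in $M_{2n+1}(\RR)$) together with $v_1,u_1,f_1,f_4$ should absorb the off-diagonal and scaling freedom. The sign ambiguity ($\JJ_0$ versus $-\JJ_0$) arises because replacing $\JJ$ by $-\JJ$ preserves both $\JJ^2=-Id$ and~\eqref{eq:nijenuis}, and these two are generally not conjugate by an automorphism preserving orientation data, hence the statement ``unique up to a sign.''

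\emph{The main obstacle} I anticipate is the first structural step: ruling out complex structures that fail to preserve $\mathcal{C}$, or equivalently showing that integrability forces $\JJ$ into a controlled triangular-type form despite $\JJ(\mathcal{C})\neq\mathcal{C}$ being a priori possible. Because $\dim\mathcal{C} = 2n+1$ is odd while a complex structure must act on even-dimensional real spaces, no single $\JJ$-invariant complement can split off $\ZZ$ cleanly, and this parity mismatch (the same one flagged in the introduction, explaining why $\JJ_0$ cannot be abelian) is what couples $\ZZ$ to $\ZZ^*$ rather than to $\RR^{2n}$ or $\RR^{*2n}$. Carefully tracking this parity through the Nijenhuis equations—showing that the only consistent pairing is $\ZZ^*\leftrightarrow\ZZ$—is the delicate part; once it is established, the reduction to~\eqref{eq:complex} via $\Aut(\thn)$ should be comparatively mechanical.
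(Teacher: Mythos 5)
Your proposal follows essentially the same route as the paper's proof: write $\JJ$ in block form adapted to the splitting $(\RR^{2n}\oplus\ZZ^*)\oplus(\RR^{*2n}\oplus\ZZ)$, translate $N_\JJ\equiv 0$ (via the $\ad$ operators, which vanish on the center) into quadratic matrix equations in the blocks, impose $\JJ^2=-Id$, and then conjugate by an automorphism from Lemma~\ref{le:auto} --- using the almost-symplectic block $\bar F_1$, the scalars $f_1, f_4$, and the free block $F_3$ --- to reach $\pm\JJ_0$. Your key structural observation, that the odd-dimensional center cannot be $\JJ$-invariant and that integrability forces the pairing $\ZZ\leftrightarrow\ZZ^*$, is exactly what the paper's solution~\eqref{eq:usloviN} exhibits (the entry $n_2\neq 0$ coupling $\ZZ$ into $\ZZ^*$), so the plan is sound and matches the published argument.
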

\begin{proof}
Vanishing of Nienhuis tensor~\eqref{eq:nijenuis} can be equivalently expressed in terms of $\ad$ operator
\begin{align}
	\label{eq:complexAd}
	\ad(X) - \JJ \ad (\JJ X) - \JJ \ad (X)\JJ - \ad(\JJ X)\JJ = 0,
\end{align}
for all $X \in \thn.$

Each $X\in \thn$ decomposes as
$X = \nu + \zeta$, $\nu \in \RR ^{2n}\oplus \ZZ^*, \zeta \in \RR ^{*2n}\oplus \ZZ$
and the matrix $A_X\circ \ad (X)$ depends only on $\nu,$ as in~\eqref{eq:uslov}.
With respect to the same decomposition write the matrix of $\JJ$ in the form
\begin{align}
	\label{eq:Jblokzapis}
	\JJ = 	\begin{pmatrix}
		J_1 & J_2 \\
		J_3 & J_4 	
	\end{pmatrix}.
\end{align}
The condition~\eqref{eq:complexAd} is now equivalent to the following matrix equations:
\begin{align*}
	& J_2 A_\nu J_2 = 0, & &J_2A_{J_1\nu + J_2 \zeta} + J_2 A_\nu J_1 = 0,&\\
	& A_{J_1\nu + J_2\zeta}J_2- J_4A_\nu J_2 = 0, & &A_\nu + J_4A_{J_1\nu + J_2 \zeta} + J_4 A_\nu J_1 - A_{J_1\nu + J_2\zeta}J_1 = 0,&
\end{align*}
for all  $\nu \in \RR ^{2n}\oplus \ZZ^*, \zeta \in \RR ^{*2n}\oplus \ZZ.$
Careful analysis of these equations yields the following solution
\begin{align}
	\label{eq:usloviN}
		J_1 = \begin{pmatrix}
		\varepsilon J & 0 \\
		0 & n_1 	
	\end{pmatrix}, \quad  J_2 = \begin{pmatrix}
	 0 & 0 \\
	0 & n_2 	
\end{pmatrix},
 \quad  J_3 = \begin{pmatrix}
	\bar  J_3  & k_3 \\
	m_3^\t & n_3 	
\end{pmatrix},
\quad  J_4 = \begin{pmatrix}
	\varepsilon  J  & 0 \\
	0  & 0 	
\end{pmatrix},
\end{align}
with $\varepsilon = \pm 1, \, n_1, n_2, n_3 \in \RR, \, k_3, m_3 \in \RR^{2n}, \, \bar J_3 \in M_{2n}(\RR).$
If we apply condition $\JJ ^2 = - Id$, we obtain
\begin{align}
	\label{eq:usloviJ2}
	m_3 = k_3 = 0, \quad n_1 = 0, \quad n_3 = -\frac{1}{n_2}, \quad \bar J_3 J + J \bar J_3 = 0.
\end{align}
Now, we apply the automorphism $F$ of the form~\eqref{eq:autoF}-\eqref{eq:autoF1} with
\begin{align*}
	\bar F_1 = J,\enskip u_1 = v_1  = 0,\enskip f_1 = n_2, f_4 = \varepsilon, \enskip
	F_3 = \begin{pmatrix}
		  \frac{\bar J_3}{2}  & 0 \\
		0 & 0 	
	\end{pmatrix},
\end{align*}
to the structure $\JJ$ of the form~\eqref{eq:Jblokzapis} and~\eqref{eq:usloviN} satisfying~\eqref{eq:usloviJ2}, and obtain
\begin{align*}
	F^{-1} \JJ F = \varepsilon \JJ _0, \quad \varepsilon = \pm 1,
\end{align*}
as in the statement of the theorem.
\end{proof}

Complex structure $\JJ$ is  said to be \emph{Hermitian} if it preserves the metric:
\begin{align*}
 \langle \JJ u, \JJ v\rangle=\langle u, v\rangle, \enskip u, v\in\gg.
\end{align*}

In the fixed basis where $\JJ_0$ has the form~\eqref{eq:complex}, the direct computation shows that the following lemma holds.

\begin{lemma}\label{le:herm}
For the Hermitian complex structure $\JJ_0$ the corresponding Riemannian metric is given by~\eqref{eq:metrike} with $\omega_4 = 1$ and $\bar S_4$ being positive definite matrix of dimension $2n\times 2n$ satisfying \mbox{$(\bar S_4)_{i(n+i)} = 0$,} $i=1,\dots n$, and $J\bar S_4=\bar S_4 J$.
\end{lemma}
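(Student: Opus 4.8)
The Hermitian condition $\langle \JJ_0 u,\JJ_0 v\rangle=\langle u,v\rangle$ is, writing the metric as a symmetric matrix $S$, simply the identity $\JJ_0^\t S\,\JJ_0=S$. The plan is to impose this identity on a metric already brought to the canonical Riemannian form~\eqref{eq:metrike} of Theorem~\ref{th:metrike} and to read off the additional constraints block by block. This is legitimate because the simultaneous action $S\mapsto F^\t SF$, $\JJ\mapsto F^{-1}\JJ F$ preserves the relation $\JJ^\t S\JJ=S$; hence a Hermitian pair remains Hermitian under normalization, provided the normalizing automorphisms are chosen inside the stabilizer of $\JJ_0$ --- the one point that needs care, discussed below.

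Concretely, I would use the $4\times4$ block decomposition of sizes $(2n,1,2n,1)$ attached to $\RR^{2n}\oplus\ZZ^*\oplus\RR^{*2n}\oplus\ZZ$. Since $S$ of the form~\eqref{eq:metrike} is block diagonal, the product $S\JJ_0$ is supported exactly on the four blocks of $\JJ_0$, namely $D(\sigma)J$, $1$, $\bar S_4 J$ and $-\omega_4$. Left-multiplying by $\JJ_0^\t$ and using $J^\t=-J$, $J^2=-E$, the diagonal blocks of $\JJ_0^\t S\JJ_0$ come out as $-J D(\sigma)J$, $\omega_4$, $-J\bar S_4 J$ and $1$, while every off-diagonal block vanishes identically. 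Matching with $S$ gives three outcomes: the $(1,1)$ block is automatic, because $D(\sigma)=\diag(\Sigma,\Sigma)$ commutes with $J$; the $(2,2)$ and $(4,4)$ blocks both force $\omega_4=1$; and the $(3,3)$ block forces $-J\bar S_4 J=\bar S_4$, equivalently $J\bar S_4=\bar S_4 J$. Combined with the positivity and the vanishing $(\bar S_4)_{i(n+i)}=0$ inherited from Theorem~\ref{th:metrike}, these are precisely the conditions in the statement.

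The step I expect to be the real obstacle is the reduction hidden in the first paragraph. The passage to~\eqref{eq:metrike} in Theorem~\ref{th:metrike} employs a symplectic $\bar F_1$ (a Williamson diagonalization of the $\RR^{2n}$-block) that in general does not commute with $J$, hence need not fix $\JJ_0$. To repair this I would impose the Hermitian identity on a \emph{general} symmetric $S$ first: its $(1,1)$ block already forces the $\RR^{2n}$-part of the metric to commute with $J$, so that positive symmetric block admits a \emph{unitary}-symplectic diagonalization, i.e. one carried out by a transformation commuting with $J$ and therefore lying in the stabilizer of $\JJ_0$. The symplectic rotations~\eqref{eq:srot} commute with $J$ and act trivially on $\ZZ^*\oplus\ZZ$, so they too preserve $\JJ_0$, and the remaining normalizations (killing $S_2$ and the off-diagonal vectors $t_k$) can be arranged within the same stabilizer. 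With the reduction thus performed by $\JJ_0$-preserving automorphisms, the block computation above completes the proof.
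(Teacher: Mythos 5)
Your second paragraph is correct, and it is exactly the paper's proof: the paper reads the lemma in the fixed basis where $\JJ_0$ has the form~\eqref{eq:complex} and the metric already has the form~\eqref{eq:metrike}, and verifies it by precisely the block computation you perform (the identity $\JJ_0^\t S\JJ_0=S$ reduces to $\omega_4=1$ and $J\bar S_4=\bar S_4 J$, the $(1,1)$ block being automatic since $D(\sigma)$ commutes with $J$). Had your proposal consisted of that paragraph alone, it would be a complete proof of the statement as the paper intends it.

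The gap is in the reduction you yourself flag as the obstacle, and it is not repairable: the closing claim that ``the remaining normalizations (killing $S_2$ and the off-diagonal vectors $t_k$) can be arranged within the same stabilizer'' is false. Writing $\JJ_0F=F\JJ_0$ for $F\in\Aut(\thn)$ in the parametrization of Lemma~\ref{le:auto}, the top-right block of the relation forces $v_1=0$, $u_4=0$ and $f_1=f_4$ (besides $\bar F_1J=J\bar F_1$ and analogous constraints on $F_3$). But in the proof of Theorem~\ref{th:metrike} it is exactly $v_1$ (equivalently $u_4$) that kills the vector $t_4$, and it is the independence of $f_1$ from $f_4$ that allows $\sigma_n=1$ and $\omega_1=1$ to be achieved simultaneously; both freedoms vanish inside the stabilizer of $\JJ_0$. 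This loss is fatal, not a defect of your particular method: for $\lambda>0$ the metric $S_\lambda=\diag(E,\lambda,E,\lambda)$ (blocks of sizes $2n,1,2n,1$) satisfies $\JJ_0^\t S_\lambda\JJ_0=S_\lambda$, so $\JJ_0$ is Hermitian for it; yet for $n=1$ its Ricci eigenvalues are $-\tfrac12(\lambda+\lambda^{-1})$ (twice), $-\lambda^{-1}$, $\tfrac{1}{2\lambda}$ (twice), $\tfrac{\lambda}{2}$, while a metric of the lemma's form $\diag(E,1,qE,1)$ has Ricci eigenvalues $-\tfrac12(1+q)$ (twice), $-q$, $\tfrac q2$ (twice), $\tfrac12$, and these multisets agree only for $\lambda=1$. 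Hence for $\lambda\neq1$ no automorphism at all carries the Hermitian pair $(S_\lambda,\JJ_0)$ to a pair consisting of the lemma's normal form together with $\JJ_0$. Consequently the lemma cannot be upgraded to a classification of Hermitian pairs up to $\Aut(\thn)$ --- the stronger statement your first and third paragraphs aim at is simply not true --- and it must be read, as the paper reads it, as a statement about metrics already in the canonical form~\eqref{eq:metrike}. Your intermediate observation is correct as far as it goes: a positive symmetric block commuting with $J$ is diagonalized by an element of $Sp(2n,\RR)\cap O(2n)$, which does stabilize $\JJ_0$; but that normalizes only $S_{11}$, and the obstruction above concerns the remaining blocks.
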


Now, we describe all $\JJ _0$-invariant closed 2-forms on the Lie algebra $\gg= \thn$.

In Section~\ref{sec:prelim} we introduced the standard basis of Lie algebra $\thn$:
\begin{align*}
e_1, \dots , e_n, f_1, \dots , f_n, z^*, e_1^*, \dots , e_n^*, f_1^*, \dots , f_n^*, z.
\end{align*}
Denote the dual basis on $\gg ^* = ({\thn})^*$ by
\begin{align}\label{eq:formebaza}
e^1, \dots , e^n, f^1, \dots , f^n, \zeta ^*, e^1_*, \dots , e^n_*, f^1_*, \dots , f^n_*, \zeta.
\end{align}
\begin{theorem}\label{th:forme}
	All $\JJ _0$ invariant closed 2-forms on $\thn$, $(n>1)$ are of the form
	\begin{align}\label{eq:forme}
		\Omega  =\, &  A^1_{ij}(e^i\wedge e^j + f^i\wedge f^j ) + A^2_{ij} e^i\wedge f^j + K_{ij}(e^i\wedge e^j_* + f^i\wedge f^j_*) \nonumber\\
		& +   d_i\, e^i\wedge f^i_* - \frac{\mu}{2} (e^i\wedge e^i_* + f^i\wedge f^i_*)  + \mu \, \zeta ^* \wedge \zeta,
	\end{align}
where $A^1_{ij} = - A^1_{ji}$,  $A^2_{ij} =  A^2_{ji}$,  $K_{ij} = -K_{ji}$, $d_i\in \RR$, $\mu \in \RR$ and summation is assumed over repeated indices.

The dimension of the space of such forms is $\frac{3n^2 + n + 2}{2}$, $n>1.$
\end{theorem}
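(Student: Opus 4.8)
The plan is to turn both defining conditions on $\Omega$—$\JJ_0$-invariance~\eqref{eq:Jinvariant} and closedness—into linear conditions on its coefficients in the dual basis~\eqref{eq:formebaza}, and then count the dimension of the solution space. First I would compute the Chevalley--Eilenberg differential $d$ on the dual basis, reading the structure constants off~\eqref{eq:thnkom2}. Since the only nonzero brackets produce $z$, $e_i^*$ and $f_i^*$, the one-forms $e^i, f^i, \zeta^*$ are all closed, whereas
\[ de^i_* = f^i\wedge\zeta^*,\quad df^i_* = -e^i\wedge\zeta^*,\quad d\zeta = -\sum_i e^i\wedge f^i \]
(up to the global sign fixed by the bracket convention). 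Hence $d\Omega$ only sees the components of $\Omega$ involving $e^i_*,f^i_*$ or $\zeta$, which cuts the computation down substantially.

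Next I would organize $\JJ_0$-invariance. The key structural remark is that $\JJ_0$ from~\eqref{eq:complex} preserves the three subspaces $\RR^{2n}$, $\RR^{*2n}$ and $\langle z^*,z\rangle$, acting as $J$ on the first two and swapping $z^*\leftrightarrow z$ on the last. A $\JJ_0$-invariant two-form is thus a real $(1,1)$-form, and I would list the invariant forms block by block over the six pairings of these three subspaces; a short dimension check should confirm the invariant space has dimension $(2n+1)^2$. In particular the purely $\RR^{2n}$-directed forms give the $A^1$ (antisymmetric) and $A^2$ (symmetric) families, the mixed pairing $\RR^{2n}\wedge\RR^{*2n}$ yields the two invariant families $e^i\wedge e^j_* + f^i\wedge f^j_*$ and $e^i\wedge f^j_* - f^i\wedge e^j_*$, and $\Lambda^2\langle z^*,z\rangle$ contributes only $\zeta^*\wedge\zeta$.

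Then I would impose $d\Omega=0$ one block at a time. The purely $\RR^{2n}$-directed part is automatically closed, so the entire $A^1,A^2$ family survives. On the mixed block the differential sends each basis form to a three-form of shape $\omega\wedge\zeta^*$ with $\omega$ a two-form on $\RR^{2n}$; matching coefficients should force the $e\wedge e_*+f\wedge f_*$ coefficient matrix to be antisymmetric (the $K$ family), with its diagonal tied—through $d(\zeta^*\wedge\zeta)=\sum_i e^i\wedge f^i\wedge\zeta^*$—to the single parameter $\mu$. The forms supported on $\RR^{*2n}$, and the cross term $\RR^{*2n}\wedge\langle z^*,z\rangle$, should be eliminated entirely. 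This is precisely where the hypothesis $n>1$ enters: the invariant forms $e^i\wedge\zeta^*-f^i\wedge\zeta$ and $e^i\wedge\zeta+f^i\wedge\zeta^*$ have differentials built from the three-forms $f^i\wedge e^k\wedge f^k$ and $e^i\wedge e^k\wedge f^k$ with $k\neq i$, which are nonzero exactly when $n>1$; for $n>1$ these forms are killed, whereas for $n=1$ the sums are empty and they survive, accounting for the extra forms of Remark~\ref{re:forme}.

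The final step is to assemble the surviving parameters and count. The step I expect to be the main obstacle—and the one most prone to error—is the exhaustive analysis of the mixed block $\RR^{2n}\wedge\RR^{*2n}$ together with its coupling to $\zeta^*\wedge\zeta$, since completeness here is delicate: one must capture \emph{every} closed invariant combination. In particular, the symmetric counterpart of the $K$ family, namely $e^i\wedge f^j_* + e^j\wedge f^i_* - f^i\wedge e^j_* - f^j\wedge e^i_*$ for $i\neq j$, appears to be $\JJ_0$-invariant and closed in my preliminary computation, so carefully determining which of these combinations genuinely survive—and reconciling that with the stated dimension $\frac{3n^2+n+2}{2}$—is the crux of the argument and the part that warrants the most meticulous bookkeeping.
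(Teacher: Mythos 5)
Your strategy is exactly the paper's own: the paper writes down the structure equations $de^i=df^i=d\zeta^*=0$, $de^i_*=f^i\wedge\zeta^*$, $df^i_*=-e^i\wedge\zeta^*$, $d\zeta=\sum_i e^i\wedge f^i$, and then omits the rest as ``straightforward computation''; your block-by-block analysis (including the correct explanation of where $n>1$ enters, via $d(e^i\wedge\zeta^*-f^i\wedge\zeta)=\sum_{k\neq i}f^i\wedge e^k\wedge f^k$) is precisely that computation. However, the tension you flag at the end is not a bookkeeping subtlety that more care will dissolve: your preliminary computation is correct, and it contradicts the statement as printed. Under the paper's own conventions ($\JJ_0$ as in~\eqref{eq:complex}, so that on dual one-forms $e^i\circ\JJ_0=-f^i$, $f^i\circ\JJ_0=e^i$, $e^i_*\circ\JJ_0=-f^i_*$, $f^i_*\circ\JJ_0=e^i_*$), the forms $Q_{ij}:=e^i\wedge f^j_*-f^i\wedge e^j_*$ are $\JJ_0$-invariant for all $i,j$, and
\begin{align*}
dQ_{ij}=(e^i\wedge e^j+f^i\wedge f^j)\wedge\zeta^*,
\end{align*}
so every symmetric combination $Q_{ij}+Q_{ji}$ is closed; for $i\neq j$ this is exactly your $\Phi_{ij}$, and it involves the basis forms $e^i\wedge f^j_*$, $f^i\wedge e^j_*$ ($i\neq j$) which appear nowhere in~\eqref{eq:forme}. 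Note moreover that the printed term $d_i\,e^i\wedge f^i_*$ is itself not $\JJ_0$-invariant, since $\JJ_0$ pulls $e^i\wedge f^i_*$ back to $-f^i\wedge e^i_*$; the invariant object is its completion, the diagonal $Q_{ii}=e^i\wedge f^i_*-f^i\wedge e^i_*$.

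Consequently, the closed invariant forms in the mixed block are $K_{ij}(e^i\wedge e^j_*+f^i\wedge f^j_*)$ with $K$ antisymmetric (off-diagonal), the $\mu$-coupled diagonal, and $B_{ij}Q_{ij}$ with $B$ an arbitrary \emph{symmetric} matrix: imposing $d\Omega=0$ forces the coefficient matrix of the $P_{ij}=e^i\wedge e^j_*+f^i\wedge f^j_*$ family to be antisymmetric up to the $\mu$-trace (its differentials lie in the span of $e^i\wedge f^j\wedge\zeta^*$), while it forces the coefficient matrix of the $Q_{ij}$ family to be symmetric (its differentials lie in the disjoint span of $e^i\wedge e^j\wedge\zeta^*$, $f^i\wedge f^j\wedge\zeta^*$) --- nothing eliminates its off-diagonal symmetric part. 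The correct dimension is therefore
\begin{align*}
n^2+\tfrac{n(n-1)}{2}+\tfrac{n(n+1)}{2}+1=2n^2+1,
\end{align*}
which exceeds the stated $\tfrac{3n^2+n+2}{2}$ by $\tfrac{n(n-1)}{2}$ for $n>1$; the two counts agree only at $n=1$ (both give $3$, hence $5$ with the two extra forms of Remark~\ref{re:forme}, matching the cited six-dimensional literature), which is presumably how the discrepancy escaped notice. So do not treat ``reconciling with the stated dimension'' as an obstacle your proof must overcome: carried out to the end, your argument produces a corrected version of Theorem~\ref{th:forme} (family enlarged by the symmetric $B$-terms, and $d_i\,e^i\wedge f^i_*$ replaced by $d_i\,Q_{ii}$), not a verification of it, and the correction propagates to the pseudo-K\"ahler family of Theorem~\ref{th:pseudoK}.
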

\begin{proof}
	The commutators~\eqref{eq:thnkom2} can be written in the form
\begin{align*}
		de^i &=0, \enskip d f^i =0, & d\zeta ^* &= 0, & &\\
	de^i_* &= f^i\wedge \zeta ^*,   & df^i_* &= -e^i\wedge \zeta ^*, & d\zeta &= e^i\wedge f^i.
\end{align*}
The proof is by straightforward computation and will be omitted. Start from the general 2-form $\Omega$ over the basis~\eqref{eq:formebaza}. Impose the condition~\eqref{eq:Jinvariant} for complex structure $\JJ = \JJ _0$ given by~\eqref{eq:complex}. Finally, the condition $d\Omega =0$ bring us to the form~\eqref{eq:forme}.
\end{proof}
	
\begin{remark}\label{re:forme}
	For $n=1$, i.e. in case of 6-dimensional Lie algebra $T^*\mathfrak{h}_{3}$ the result is obtained in~\cite{SVB}.  The dimension of $\JJ$-invariant closed 2-forms is five. Such forms has general for~\eqref{eq:forme} with additional  terms
\begin{align*}
	a_1(e^1\wedge \zeta ^* - f^1\wedge \zeta) + a_2(f^1\wedge \zeta ^* + e^1\wedge \zeta),\enskip  a_1, a_2\in \RR.
\end{align*}
	These terms come from a different decomposition of the space of 3-forms in basis~\eqref{eq:formebaza}.
\end{remark}

As noted in the Introduction, the complex structure $\JJ_0$ is not abelian, hence the Hermitian metric from Lemma~\ref{le:herm} cannot be pseudo-K\"{a}hler (see~\cite[Theorem A]{Benson} and~\cite{CFG}). However, we show that there exists a family of Ricci-flat pseudo-K\"{a}hler metrics.

\begin{theorem}\label{th:pseudoK}
The Lie algebra $\thn$ admits Ricci-flat pseudo-K\"{a}hler metrics that are not flat. Every pseudo-K\"{a}hler metric on $\thn$ is equivalent to  $S=-\JJ _0\Omega$, where $\JJ _0$ is complex structure~\eqref{eq:complex} and $\Omega$ is symplectic form given by~\eqref{eq:forme}.
\end{theorem}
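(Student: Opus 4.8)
The plan is to combine the two preceding classification results with a Koszul-type computation of the Ricci form, so that the bulk of the work is either already done or reduces to a structural observation about two-step nilpotency.

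First I would settle the classification assertion. By Theorem~\ref{th:complex} any complex structure on $\thn$ is $\Aut$-conjugate to $\varepsilon\JJ_0$ with $\varepsilon=\pm 1$, and the defining relation~\eqref{eq:herm} is invariant under the simultaneous change $(\JJ,\Omega)\mapsto(-\JJ,-\Omega)$, so there is no loss in taking $\JJ=\JJ_0$. A compatible Kähler form must be $\JJ_0$-invariant and closed, hence of the form~\eqref{eq:forme} by Theorem~\ref{th:forme}. Writing $\Omega$ as an antisymmetric matrix, the bilinear form~\eqref{eq:herm} reads $S=-\JJ_0\Omega$, and the identities $\JJ_0^\t=-\JJ_0$ together with the matrix form of $\JJ_0$-invariance $\JJ_0^\t\Omega\JJ_0=\Omega$ give $S^\t=S$, so $S$ is indeed symmetric. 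It then remains to isolate the open condition on the parameters $A^1,A^2,K,d_i,\mu$ of~\eqref{eq:forme} under which $\Omega$ is nondegenerate, equivalently under which $S$ is a genuine metric; I expect this to force $\mu\neq 0$ together with invertibility of the block built from $K$ and the $d_i$, and this is exactly what makes $\Omega$ symplectic.

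Next, the Ricci-flatness. I would invoke Koszul's formula for the Ricci form of an invariant Kähler structure: $\rho=\tfrac12\,d\gamma$, where $\gamma\in\gg^*$ is the Koszul one-form $\gamma(X)=\tr(\JJ_0\,\ad(X))$, and $\mathrm{Ric}(X,Y)=\rho(X,\JJ_0Y)$. The decisive structural observation is that $\gg=\thn$ is two-step nilpotent, so $[\gg,\gg]$ lies in the center and $\ad$ annihilates the center. Consequently, using the Chevalley--Eilenberg differential,
\begin{align*}
 d\gamma(X,Y)=-\gamma([X,Y])=-\tr\big(\JJ_0\,\ad([X,Y])\big)=0,
\end{align*}
because $[X,Y]$ is central and hence $\ad([X,Y])=0$; the same argument disposes of any auxiliary term of the shape $\tr(\ad(\JJ_0X))$, which vanishes identically since every $\ad$ is nilpotent. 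Thus $\rho\equiv 0$ and every metric in the family is Ricci-flat, independently of the parameters. This is the conceptual heart of the argument and makes the Ricci computation essentially free.

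Finally I would establish that these metrics are genuinely pseudo-Kähler yet not all flat. Fixing parameters for which $\Omega$ is nondegenerate, I would compute the Levi-Civita connection from the Koszul identity $2\langle\nabla_XY,Z\rangle=\langle[X,Y],Z\rangle-\langle[Y,Z],X\rangle+\langle[Z,X],Y\rangle$, where left-invariance kills the differentiation terms, using the explicit brackets~\eqref{eq:thnkom2} in which the only nonzero commutators produce a central $z$-component and the $J^*$-terms. Feeding $\nabla$ into $R(X,Y)=\nabla_X\nabla_Y-\nabla_Y\nabla_X-\nabla_{[X,Y]}$, it then suffices to exhibit a single nonvanishing component, for instance one of the $R(e_i,f_i,\cdot,\cdot)$, to conclude non-flatness. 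The main obstacle is precisely this last point: one must verify the delicate cancellation that annihilates the Ricci trace while the full Riemann tensor survives, so the bookkeeping distinguishing Ricci-flat from flat is where real care is needed; the nondegeneracy analysis of~\eqref{eq:forme} is the only other nontrivial ingredient.
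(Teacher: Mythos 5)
Your proposal is correct, and its first and last steps coincide with the paper's own proof, but the middle (and central) step takes a genuinely different route. The classification half is the same in both: the form $S=-\JJ_0\Omega$ is read off from~\eqref{eq:herm} combined with Theorems~\ref{th:complex} and~\ref{th:forme}; you are in fact more careful than the paper, since you make explicit the sign normalization $(\JJ,\Omega)\mapsto(-\JJ,-\Omega)$, the symmetry check via $\JJ_0^\t=-\JJ_0$ and $\JJ_0^\t\Omega\JJ_0=\Omega$, and the nondegeneracy condition on the parameters of~\eqref{eq:forme}, which the paper leaves implicit in the word ``symplectic'' (note that since $K$ is skew-symmetric, the relevant block is automatically invertible once $\mu\neq 0$, so $\mu\neq 0$ is essentially the only open condition). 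For Ricci-flatness the paper quotes the formula for the Ricci tensor of a left-invariant metric on a nilpotent Lie group, $\rho(u,v)=-\frac14\tr(j_u\circ j_v)-\frac12\tr(\ad_u\circ\ad^*_v)$ with $j_uv=\ad^*_vu$, and then checks by direct calculation, for the specific metrics $S=-\JJ_0\Omega$, that both traces vanish; you instead invoke Koszul's formula $\rho=\frac12\,d\gamma$ with $\gamma(X)=\tr(\JJ_0\,\ad(X))$ and observe that two-step nilpotency kills $d\gamma(X,Y)=-\gamma([X,Y])$ identically, because $[X,Y]$ is central (your remark that the term $\tr(\ad(\JJ_0 X))$ of the full Koszul form vanishes by nilpotency correctly reconciles your $\gamma$ with the standard Koszul one-form). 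Your argument is more conceptual and proves strictly more --- every invariant pseudo-K\"ahler metric on \emph{any} two-step nilpotent Lie algebra is Ricci-flat, with no metric-specific computation --- whereas the paper's verification is tied to this family; the price is that you must justify or cite the validity of Koszul's formula in the left-invariant pseudo-K\"ahler setting (it does hold there, as its derivation uses only $\nabla\JJ=0$ and nondegeneracy, but a reference is needed --- the same criticism applies to the paper's uncited Ricci formula, so the two proofs are on equal methodological footing). For non-flatness the two arguments are identical in substance: compute the Levi-Civita connection from the Koszul identity and exhibit a nonzero component of $R(e_1,f_1)$; neither you nor the paper actually displays this computation, so your honest flagging of it as the place requiring real bookkeeping matches the paper's ``long, but straightforward computation''.
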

\begin{proof}
  The form of the pseudo-K\"{a}hler metric follows directly from~\eqref{eq:herm}. Since, we have already fixed the basis in a way that the complex form $\JJ$ takes the form~\eqref{eq:complex}, the choice of automorphisms preserving $\JJ_0$ is quite restricted. The obtained simplification is insignificant comparing to the difficult notation required, hence we choose not to perform it.

  Next, we need to examine the curvature properties of those metrics. The Ricci tensor $\rho$ on the nilpotent Lie group can be expressed in terms of operators $\ad$ and $\ad^*$:
  \begin{align*}
    \rho(u,v) = -\frac{1}{4}\tr(j_u\circ j_v)-\frac{1}{2}\tr(\ad_u\circ\ad^*_v),
  \end{align*}
  where $j_u v=\ad^*_v u$ for arbitrary left invariant vector fields $u, v$. By direct calculation we get that both summands are equal to zero, hence the metric is Ricci-flat.   Now, the only thing left is to show that at least one component of curvature tensor $R$ is non-zero. For example, long, but straightforward computation gives us that $R(e_1, f_1)$ contains non-trivial components.
\end{proof}

\begin{thank}
This research was supported by the Science Fund of the Republic of Serbia,
Grant No. 7744592, Integrability and Extremal Problems in Mechanics, Geometry
and Combinatorics - MEGIC.
\end{thank}


\begin{thebibliography}{99}
\bibitem{Aleksijevski}{D.~V. Alekseevski\u{\i}, \emph{Homogeneous Riemannian spaces of negative curvature}, Math. USSR, Sb. 1975; 25(1): 87--109.}

\bibitem{Benson}{C. Benson, C.~S. Gordon, \emph{K\"{a}hler and symplectic structures on nilmanifolds}, Topology 1988; 27: 513--518.}

\bibitem{Bokan1}{N. Bokan, T. \v{S}ukilovi\'{c}, S. Vukmirovi\'{c}, \emph{Lorentz geometry of 4-dimensional nilpotent Lie groups}, Geom. Dedicata. 2015; 177(1): 83--102.}

\bibitem{CSO}{R. Campoamor-Stursberg, G. Ovando, \emph{Invariant complex structures on tangent and cotangent Lie groups of dimension six}, Osaka J. Math. 2012; 49(2): 489--551}

\bibitem{Conti}{D. Conti, V. del Barco, F.~A. Rossi, \emph{Uniqueness of ad-invariant metrics}, arXiv preprint arXiv:2103.16477, 2021.}

\bibitem{CFG}{L.~A. Cordero, M. Fern\'{a}ndez, A. Gray, \emph{Symplectic manifolds with no K\"{a}hler structure}, Topology, 1986; 25(3): 375--380.}

\bibitem{Cordero}{L.~A. Cordero, M. Fern\'{a}ndez, A. Gray, L. Ugarte, \emph{Nilpotent complex structures on compact nilmanifolds}, Rend. Circ. Mat. Palermo (2) Suppl. 1997; 49: 83--100.}

\bibitem{CorderoParker}{L.~A. Cordero, P.~E. Parker, \emph{Left-invariant Lorentz metrics on 3-dimensional Lie groups}, Rend. Mat. Appl. 1997; 17: 129--155.}

\bibitem{Eberlein}{P. Eberlein, \emph{Geometry of 2-step nilpotent groups with a left invariant metric}, Ann. de l' \'{E}c. Norm. 1994; 27(5): 611--660.}

\bibitem{Gordon} {C.S. Gordon, E.N. Wilson, \emph{Isometry groups of Riemannian solvmanifolds}, Trans. Amer. Math. Soc. 1988; 307(1): 245--269.}

\bibitem{Tamaru1} {T. Hashinaga, H. Tamaru, K. Terada, \emph{Milnor-type theorems for left-invariant Riemannian metrics on Lie groups}, J. Math. Soc. Japan, 2016; 68(2): 669--684.}

\bibitem{Homolya}{Sz. Homolya, O. Kowalski, \emph{Simply connected two-step homogeneous nilmanifolds of dimension 5}, Note Mat. 2006; 26(1): 69--77.}

\bibitem{Tamaru2}{H. Kodama, A. Takahara, H. Tamaru, \emph{The space of left-invariant metrics on a Lie group up to isometry and scaling}, Manuscr. Math., 2011; 135:  229--243.}

\bibitem{Lauret1} {J. Lauret, \emph{Homogeneous nilmanifolds of dimension $3$ and $4$}, Geom. Dedicata. 1997; 68: 145--155.}

\bibitem{Milnor} {J. Milnor, \emph{Curvatures of left invariant metrics on Lie groups}, Adv. Math. 1976; 21(3): 293--329.}

\bibitem{Salamon}{S.~M. Salamon, \emph{Complex structures on nilpotent Lie algebras}, J. Pure Appl. Algebra 2001; 157: 311--333.}
	
\bibitem{SVB}{T. \v{S}ukilovi\'{c}, S. Vukmirovi\'{c},N. Bokan,  \emph{On the moduli spaces of left invariant metrics on cotangent bundle of Heisenberg group}, preprint, arXiv:2109.00447 [math.DG].}

\bibitem{Sukilovic1}{T. \v{S}ukilovi\'{c}, \emph{Geometric properties of neutral signature metrics on 4-dimensional nilpotent Lie groups}, Rev. Uni\'{o}n Mat. Argent. 2016; 57(1): 23--47.}

\bibitem{Vuk2015}{S. Vukmirovi\'{c}, \emph{Classification of left-invariant metrics on the Heisenberg group}, J. Geom. Phys. 2015; 94: 72--80.}

\bibitem{Williamson}{J. Williamson, \emph{On the algebraic problem concerning the normal forms of linear dynamical systems}, Am. J. Math. 58, 141 (1936).}

\end{thebibliography}
\end{document}